\def\@tocline#1#2#3#4#5#6#7{\relax
  \ifnum #1>\c@tocdepth 
  \else
    \par \addpenalty\@secpenalty\addvspace{#2}%
    \begingroup \hyphenpenalty\@M
    \@ifempty{#4}{%
      \@tempdima\csname r@tocindent\number#1\endcsname\relax
    }{%
      \@tempdima#4\relax
    }%
    \parindent\z@ \leftskip#3\relax \advance\leftskip\@tempdima\relax
    \rightskip\@pnumwidth plus4em \parfillskip-\@pnumwidth
    #5\leavevmode\hskip-\@tempdima
      \ifcase #1
       \or\or \hskip 1em \or \hskip 2em \else \hskip 3em \fi%
      #6\nobreak\relax
    \hfill\hbox to\@pnumwidth{\@tocpagenum{#7}}\par
    \nobreak
    \endgroup
  \fi}
\newtheorem{theorem}{Theorem}[section]
\newtheorem{lemma}[theorem]{Lemma}
\newtheorem{proposition}[theorem]{Proposition}
\newtheorem{corollary}[theorem]{Corollary}
\theoremstyle{definition}
\newtheorem{example}[theorem]{Example}
\newtheorem*{theorem*}{Theorem}
\theoremstyle{remark}
\numberwithin{equation}{section}
\newcommand{\norm}[1]{\left\lVert#1\right\rVert}
\newcommand{\bignorm}[1]{\big\lVert#1\big\rVert}
\newcommand{\Bignorm}[1]{\Big\lVert#1\Big\rVert}
\newcommand{\ip}[1]{\left\langle #1 \right\rangle}
\newcommand{\bigip}[1]{\big\langle #1 \big\rangle}
\newcommand{\Bigip}[1]{\Big\langle #1 \Big\rangle}
\newcommand{\R}{\mathbb{R}}
\newcommand{\D}{\mathscr{D}}
\newcommand{\Dex}{\mathscr{D}_{\mu,\text{ex}}}
\newcommand{\n}{\noindent}
\newcommand{\symp}{\nabla^\perp}
\DeclareMathOperator{\Ad}{Ad}
\DeclareMathOperator{\ad}{ad}
\author[1]{Martin Bauer}
\author[2]{Patrick Heslin}
\author[3]{Gerard Misio{\l}ek}
\author[4]{Stephen C. Preston}
\thanks{M. Bauer was partially supported
by NSF grant DMS-1953244 and by FWF grant FWF-P 35813-N. P. Heslin was supported by the National University of Ireland's Dr. {\'E}amon de Valera Postdoctoral Fellowship}
\address{M. Bauer: Florida State University, USA}
\email{bauer@math.fsu.edu}
\address{P. Heslin: National University of Ireland, Maynooth, Ireland}
\email{patrick.heslin@mu.ie}
\address{G. Misio{\l}ek: University of Notre Dame, USA}
\email{gmisiole@nd.edu}
\address{S.C. Preston: Brooklyn College and CUNY Graduate Center, USA}
\email{stephen.preston@brooklyn.cuny.edu}
\begin{document}

\title[Geometric Analysis of the Generalized SQG Equations]{Geometric Analysis of the Generalized Surface Quasi-Geostrophic Equations}




\begin{abstract}
We investigate the geometry of a family of equations in two dimensions which interpolate between the Euler equations of ideal hydrodynamics and the inviscid surface quasi-geostrophic equation. This family can be realised as geodesic equations on groups of diffeomorphisms. We show precisely when the corresponding Riemannian exponential map is non-linear Fredholm of index 0. We further illustrate this by examining the distribution of conjugate points in these settings via a Morse theoretic approach.
\end{abstract}

\maketitle

\tableofcontents
\section{Introduction}
In the early 1990s Constantin et al. \cite{constantin1994formation} investigated a 2D hydrodynamic model known as the surface quasi-geostrophic (SQG) equation. One motivation for this was a striking mathematical analogy with the 3D Euler equations exhibited by the authors via several key analytic similarities. These investigations led to the study of a whole family of inviscid SQG-type equations which can be viewed as interpolating between the standard SQG equation and the 2D Euler equations, cf. \cite{chae2011inviscid}.

On the other hand, it is well known from the work of Arnold and his school that the motions of an ideal fluid have a beautiful geometric description as geodesic equations on the group of volume-preserving diffeomorphisms of the fluid domain \cite{arnold1966sur, arnold2021topological}. A key distinction between two and three dimensional hydrodynamics comes from examining the Fredholm properties of the associated Riemannian exponential maps. In 2D the exponential map is a non-linear Fredholm map of index zero, but in 3D it is not \cite{ebin2006singularities, misiolek2010fredholm, shnirelman2005microglobal}.

It was shown by Washabaugh in \cite{washabaugh2016the} that the standard SQG equation can also be reformulated as a geodesic equation on the group of exact volume-preserving diffeomorphisms equipped with a metric given by the homogeneous Sobolev $\dot{H}^{-\frac{1}{2}}$ inner product. He further illustrated via an explicit example that the exponential map in this setting fails to be Fredholm. Recently one of the authors and Vu showed that the entire generalized SQG family, given by
\begin{equation}\label{beta SQG}
    \begin{split}
        &\partial_t \theta + u \cdot \nabla \theta = 0 \\
        &u=\symp (-\Delta)^{\frac{\beta}{2}-1}\theta, \qquad 0\leq \beta \leq 1
    \end{split}
\end{equation}
can be viewed as geodesic equations \cite{misiolek2023on}. This opens up a possibility to investigate, using Riemannian geometric tools, the transition from 2D Euler to the surface quasi-geostrophic equations.

The central focus of this paper is to understand how the parameter $\beta$ affects properties of the corresponding Riemannian exponential maps. In particular, we prove that for $\beta < 1$ these exponential maps are non-linear Fredholm of index $0$, illustrating that the value $\beta=1$ is critical. In the former case we establish, via a Morse theoretic argument, a finite upper bound on the number of conjugate points along an arbitrary geodesic segment. As the interpolation parameter approaches the critical value, this upper bound approaches infinity. We then provide an explicit example to show that this bound is attained.

\section{Preliminaries}\label{preliminaries}
We will aim to keep this section reasonably general and concise. Full details of all the constructions can be found in \cite{arnold2021topological, ebin1970groups, misiolek2010fredholm} and the references therein.
\subsection{Configuration Spaces}
Let $M$ be a compact surface without boundary equipped with a Riemannian metric $g$ with volume form $\mu$. Let $\nabla$, $\symp$, $\mathrm{div}$ and $\Delta$ denote the associated gradient, symplectic gradient, divergence and Laplacian operators. If $E\rightarrow M$ is a smooth vector bundle over $M$, we let $H^s(E)$ denote those sections of $E$ with $L^2$ derivatives of order $s$. In particular, $H^s(TM)$ is the space of $H^s$ vector fields on $M$ which we equip with an inner product via
\begin{equation*}
    \ip{u,v}_{H^s} = \ip{u,(1 + \Delta)^s v}_{L^2}.
\end{equation*} We define the set $H^s(M,M)$ to consist of all functions from $M$ to itself which are of class $H^s$ in every chart. By the Sobolev Embedding Theorem we have that, for $s > k+1$, the space $H^s(M,M)$ continuously embeds into $C^k(M,M)$. If $s>1$, then it follows that $H^s(M,M)$ is a smooth Hilbert manifold. If $s > 2$, then the set of $H^s$-diffeomorphisms $\D^s(M)=\{C^{1}\text{ diffeomorphisms of } M\}\cap H^s(M,M)$ inherits a smooth submanifold structure as an open subset of $H^s(M,M)$. Its tangent space at the identity,  $T_e\D^s(M)=H^s(TM)$, has an $L^2$-orthogonal decomposition by the Hodge theorem:
\begin{equation}\label{hodge}
    H^s(TM) = \mathcal{H} \oplus \nabla H^{s+1}(M) \oplus \symp H^{s+1}(M)
\end{equation}
where $\mathcal{H}$ denotes the finite-dimensional subspace of harmonic vector fields on $M$.

The subgroup $\Dex^s(M)$ of exact diffeomorphisms is then defined to be the submanifold whose tangent space at the identity consists of those divergence-free vector fields with zero harmonic component, i.e. 
\begin{equation}
T_e\Dex^s(M) = \symp H^{s+1}(M) = \{u \ \vert \ u=\symp \psi_u \ \text{with} \ \psi_u \in H^{s+1 }(M)\}.
\end{equation}
We refer to $\psi_u$ as the \textit{stream function} for $u$. This configuration space will be the central focus of the paper, however we will also consider its smooth counterpart $\Dex(M) = \{C^{\infty} \text{ diffeomorphisms of } M\} \cap \Dex^s(M)$ where $T_e\Dex(M) = \symp C^{\infty}(M)$. 

It is worth noting at this point that the homogeneous Sobolev $\dot{H}^s$ inner product, given by \[\displaystyle \ip{u,v}_{\dot{H}^s} = \ip{u,\Delta^s v}_{L^2},\] when restricted to $T_e\Dex^s(M)$, induces a topology equivalent to the standard $H^s$ topology.

$\Dex^s(M)$ is further a topological group under composition where right translation $R_\eta$ is smooth, but left translation $L_\eta$ is just continuous, in the $H^s$ topology. The group adjoint operator is given by:
\begin{align}\label{exact group adjoint}
    \Ad_\eta v &= d_{\eta^{-1}}L_\eta d_eR_{\eta^{-1}} v = \big(D\eta \cdot v \big) \circ \eta^{-1} \\ \nonumber
    &= \big(D\eta \cdot \symp \psi_v\big) \circ \eta^{-1} = \symp \big(\psi_v \circ \eta^{-1} \big)
\end{align}
and the Lie algebra adjoint\footnote{ \ Strictly speaking, these groups of Sobolev diffeomorphisms are not Lie groups. For example, if $u,v \in T_e\Dex^s(M)$, their commutator is a priori only of Sobolev class $H^{s-1}$. However, the group structure they possess will be sufficient for our purposes.} by:
\begin{equation}\label{exact lie algebra adjoint}
    \ad_u v = -[u,v] = \symp \{\psi_u,\psi_v\}
\end{equation}
where $\{\psi_u,\psi_v\}=g(\symp\psi_v, \nabla \psi_u)$ is a Poisson bracket.

\subsection{Right-Invariant Metrics and their Exponential Maps}
We now recall the abstract geometric framework of V. Arnold \cite{arnold1966sur, arnold2021topological}.

Let $\ip{\cdot, \cdot}$ be an inner product on $T_e\Dex^s(M)$ which we extend to a Riemannian metric on $\Dex^s(M)$ via right translation. For a linear operator $L: T_e\Dex^s(M) \rightarrow T_e\Dex^s(M)$, we denote by $L^*$ its adjoint with respect to the metric, i.e, $\ip{Lu,v} = \ip{u, L^*v}$ for all $u,v \in T_e\Dex^s(M)$. As their definitions involve both the Lie and Riemannian structure, the geometry is in some sense encoded in the operators
$$\ip{\Ad^*_\eta u , v} = \ip{u, \Ad_\eta v} \quad \text{and} \quad \ip{\ad_u^*v, w} = \ip{v, \ad_u w}$$
associated with the Lie group and Lie algebra coadjoint representations. Geodesics $t \mapsto \gamma(t)$ in $\Dex^s(M)$ of the metric induced by $\ip{\cdot, \cdot}$ are critical paths for the energy functional. The corresponding geodesic equation, when reduced to the Lie algebra, is referred to as an \textit{Euler-Arnold} equation which takes the form:
\begin{align}\label{euler-arnold}
\begin{split}
    &\partial_t u = - \ad^{*}_{u}u \\
    & u = \dot{\gamma} \circ \gamma^{-1}.
\end{split}
\end{align}
If we further impose an initial condition $u(0)=u_0$, then we obtain an initial value problem which enjoys the following (coadjoint) conservation law
\begin{equation}\label{Lie group conservation law}
    \Ad^{{*}}_{\gamma(t)}u(t) = u_0.
\end{equation}

For a comprehensive list of notable examples of Euler-Arnold equations, cf. \cite{arnold2021topological, misiolek2010fredholm, vizman2008geodesic}. The second equation in \eqref{euler-arnold} is simply the flow equation, which provides a link between the (Eulerian) solution map $u_0 \mapsto u$ and the (Lagrangian) solution map $u_0 \mapsto (\gamma, \dot{\gamma})$. The latter has a clear geometric interpretation as a Riemannian exponential map on $\Dex^s(M)$. Namely, we have
\begin{equation}\label{lagrangian solution map}
    u_0 \mapsto \exp_e(tu_0) = \gamma(t)
\end{equation}
where $\gamma(t)$ is the unique geodesic with $\gamma(0)=e$ and $\dot{\gamma}(0)=u_0$.

\subsection{The Jacobi Equation}
Assume now that the metric induced by $\ip{\cdot, \cdot}$ has a well defined smooth exponential map on $\Dex^s(M)$. For fixed $u_0 \in T_e\Dex^s(M)$ and $T>0$, we have the geodesic segment, $\gamma \vert_{[0,T]}$, where $\gamma(t)=\exp_e(tu_0)$. The Jacobi field along $\gamma$ with initial conditions $J(0)=0$ and $\dot{J} (0) = w_0$ is then given by $J(t) = d\exp_e(tu_0)t w_0$. If we left-translate back to the Lie algebra via $v_L(t) = d_eL_{\eta^{-1}(t)}J(t)$, the field $v_L$ satisfies the system
\begin{equation}\label{left jacobi field}
\begin{split}
    &\partial_t v_L = w \\
    & \partial_t(\Ad^*_{\gamma}\Ad_{\gamma}w) + \ad^*_{w}(u_0) = 0
\end{split}
\end{equation}
with initial data $v_L(0) = 0$, $\partial_t v_L (0) = w_0$. We denote the solution operator of this Cauchy problem by
\begin{equation}
    \Phi(t): T_e\Dex^s \rightarrow T_e\Dex^s, \qquad \Phi(t) w_0 = v_L(t)
\end{equation}
and recall the following result from \cite{misiolek2010fredholm}.
\begin{lemma}\label{decomp}
    Let $u_0\in T_e\Dex$ with $\Lambda(t)$ and $K_{0}$ the operators defined by the formulas
    \begin{equation}\label{lambda and K}
        w \mapsto \Lambda(t)w=\Ad^*_{\gamma(t)}\Ad_{\gamma(t)}w \qquad \text{ and } \qquad w \mapsto K_{0}(w) = \ad^*_w u_0.
    \end{equation}
    Then the solution operator $w \mapsto \Phi(t)w= t d_eL_{\gamma(t)}^{-1} d\exp_e (tu_0)w$ can be decomposed as
    \begin{equation}\label{phi}
        \Phi(t) = \Omega(t) + \Gamma(t)
    \end{equation}
    where
    \begin{equation}\label{omega and gamma}
        \Omega(t) = \int_0^t \Lambda(\tau)^{-1} \ d\tau \qquad \text{ and } \qquad \Gamma(t) = \int_0^t \Lambda(\tau)^{-1}K_{0}\Phi(\tau) \ d\tau.
    \end{equation}
\end{lemma}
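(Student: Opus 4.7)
The plan is to solve the Jacobi system \eqref{left jacobi field} by integrating it directly in time. The crucial starting observation is that $\gamma(0)=e$ forces $\Lambda(0)=\Ad_e^{*}\Ad_e=\mathrm{Id}$. The second equation in \eqref{left jacobi field} reads $\partial_t(\Lambda(t)w(t))=-K_{0}w(t)$ in the notation of \eqref{lambda and K}, and integrating from $0$ to $t$ using $w(0)=w_0$ yields
\begin{equation*}
\Lambda(t)w(t) \;=\; w_0 \;-\; \int_0^t K_{0}w(\tau)\,d\tau.
\end{equation*}
Formally inverting $\Lambda(t)$ and using the first equation $\partial_t v_L = w$ together with $v_L(0)=0$ gives
\begin{equation*}
v_L(t) \;=\; \int_0^t \Lambda(s)^{-1} w_0\,ds \;-\; \int_0^t \Lambda(s)^{-1}\, K_{0}\Big(\int_0^s w(\tau)\,d\tau\Big) ds.
\end{equation*}

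The key identification is then that the inner integral equals $v_L(s)$ itself, since $\partial_\tau v_L=w$ and $v_L(0)=0$; equivalently, $\int_0^s w(\tau)\,d\tau = \Phi(s)w_0$. Substituting this into the expression above and comparing with the definitions of $\Omega$ and $\Gamma$ in \eqref{omega and gamma}, one recovers $\Phi(t)=\Omega(t)+\Gamma(t)$ (up to the sign convention chosen for $K_0$ relative to $\ad^{*}_w u_0$). An alternative, cleaner presentation would be to \emph{define} $\widetilde{\Phi}:=\Omega+\Gamma$ as the solution of the Volterra-type fixed-point equation $\widetilde{\Phi}(t)=\Omega(t)+\int_0^t \Lambda(\tau)^{-1}K_{0}\widetilde{\Phi}(\tau)\,d\tau$, differentiate twice in $t$, and check that $\widetilde{\Phi}w_0$ solves the Cauchy problem \eqref{left jacobi field}; uniqueness of solutions then forces $\widetilde{\Phi}=\Phi$.

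The main technical obstacle is justifying the pointwise invertibility of $\Lambda(t)$ and the smoothness of $t\mapsto \Lambda(t)^{-1}$, which is why the lemma is stated for $u_0\in T_e\Dex$ (the smooth category): the operator $\Ad_{\gamma(t)}v=(D\gamma(t)\cdot v)\circ\gamma(t)^{-1}$ involves composition with $\gamma(t)^{-1}$, so in Sobolev class it need not preserve $H^s$ regularity, whereas on $T_e\Dex$ both $\Ad_{\gamma(t)}$ and $\Ad^{*}_{\gamma(t)}$ are bijections of the Fréchet space with smooth inverses depending smoothly on $t$. Once invertibility and continuity in $t$ are in hand, the integrals defining $\Omega$ and $\Gamma$ converge and Picard iteration on the Volterra equation shows $\Phi$ is well-defined and equal to $\Omega+\Gamma$. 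No deeper obstacle arises; the lemma is essentially the variation-of-parameters formula adapted to the group-theoretic form of the Jacobi equation.
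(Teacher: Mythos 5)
Your argument is correct and is essentially the proof of this lemma in the cited source \cite{misiolek2010fredholm} (the present paper only recalls the statement without proof): integrate the conservation-law form of the second equation in \eqref{left jacobi field} once to get $\Lambda(t)w(t)=w_0-\int_0^t K_0 w(\tau)\,d\tau$, invert $\Lambda$, integrate again using $\partial_t v_L=w$, and identify $\int_0^s w(\tau)\,d\tau=v_L(s)=\Phi(s)w_0$ to obtain the Volterra structure. The one point worth making explicit is that this literal computation, with $K_0(w)=\ad^*_w u_0$ as in \eqref{lambda and K}, produces $\Phi=\Omega-\Gamma$ rather than $\Phi=\Omega+\Gamma$; you correctly flag this as a sign convention, and it is harmless since only the compactness and norm of $\Gamma$ enter the subsequent Fredholm arguments.
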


\section{The SQG Family of Equations}

\subsection{The Euler-Arnold Equations and Well-Posedness}
\par We now equip the space of $H^s$-Sobolev exact volume-preserving diffeomorphisms of a closed two-dimensional Riemannian manifold $M$ with a family of right-invariant metrics\footnote{ \ More precisely, these are \textit{weak} Riemannian metrics on $\Dex^s(M)$ in the sense that they induce a weaker topology than the inherent $H^s$ topology.} given at the identity map by:
\begin{equation}\label{beta metric}
\begin{split}
    \ip{u,v}_{\dot{H}^{-\beta/2}} &= \bigip{\symp \psi_u,(-\Delta)^{-\frac{\beta}{2}}\symp \psi_v}_{L^2} \\
    &= \int_{M} \psi_u (-\Delta)^{1-\frac{\beta}{2}} \psi_v \ d\mu \ , \qquad 0\leq \beta \leq 1
\end{split}
\end{equation}
where again $\psi_u$ and $\psi_v$ are the stream functions for $u$ and $v$. Using \eqref{exact group adjoint} and \eqref{exact lie algebra adjoint} we compute the group and Lie algebra coadjoints
\begin{align*}
    \ip{\Ad^{*}_\eta u , v}_{\dot{H}^{-\beta/2}} &= \ip{u, \Ad_\eta v}_{\dot{H}^{-\beta/2}} \\
    &= \ip{\symp \psi_u , \symp (\psi_v \circ \eta^{-1})}_{\dot{H}^{-\beta/2}} \\
    &= \int_{M} (-\Delta)^{1-\frac{\beta}{2}} \psi_u \cdot \psi_v \circ \eta^{-1} \ d\mu
\end{align*}
and
\begin{align*}
    \ip{\ad_u^{*}v, w}_{\dot{H}^{-\beta/2}} &= \ip{v, \ad_u w }_{\dot{H}^{-\beta/2}} \\
    &= \ip{\symp \psi_v, \symp \{\psi_u, \psi_w\}}_{\dot{H}^{-\beta/2}} \\
    &= \int_{M} (-\Delta)^{1-\frac{\beta}{2}}\psi_v \{\psi_u, \psi_w\} \ d\mu \\
    &= \int_{M} \{\psi_u, (-\Delta)^{1-\frac{\beta}{2}}\psi_v\} \psi_w \ d\mu
\end{align*}
giving us
\begin{equation}\label{group coadjoint}
    \Ad^{*}_\eta u = \symp (-\Delta)^{\frac{\beta}{2}-1} R_\eta (-\Delta)^{1-\frac{\beta}{2}} \psi_u,
\end{equation}
and
\begin{equation}\label{lie algebra coadjoint}
    \ad_u^{*}v = \symp (-\Delta)^{\frac{\beta}{2}-1}\{(-\Delta)^{1-\frac{\beta}{2}}\psi_v, \psi_u\}.
\end{equation}

Hence, for the metrics \eqref{beta metric} the Euler-Arnold equations \eqref{euler-arnold} become the generalized SQG equations \eqref{beta SQG}
\begin{equation*}
    \begin{split}
        &\partial_t \theta + u \cdot \nabla \theta = 0 \\
        &u=\symp (-\Delta)^{\frac{\beta}{2}-1}\theta, \qquad  0\leq \beta \leq 1.
    \end{split}
\end{equation*}
Each of the $\beta$-metrics in \eqref{beta metric} has an associated Riemannian exponential map as in \eqref{lagrangian solution map}. The following proposition is a direct consequence of the results in \cite{misiolek2023non}.
\begin{proposition}\label{truong}
    For $s>3$ and $0\leq\beta\leq1$ the Riemannian exponential map on $\Dex^s(M)$ of the metric \eqref{beta metric} is $C^\infty$ and, consequently, the differential \[d\exp_e(tu_0): T_e\Dex^s(M) \rightarrow T_{\exp_e(tu_0)}\Dex^s \ , \qquad u_0 \in T_e\Dex^s(M)\] is a bounded linear operator.
\end{proposition}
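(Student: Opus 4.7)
The plan is to follow the Ebin-Marsden paradigm, realizing $\exp_e$ as the time-one map of the flow of the geodesic spray on $T\Dex^s(M)$. Once the spray is shown to be a smooth vector field on $T\Dex^s(M)$, classical ODE theory on Hilbert manifolds produces a smooth local flow, and restricting the time-$t$ map to the fibre over the identity recovers precisely $\exp_e(tu_0)=\gamma(t)$. Boundedness of $d\exp_e(tu_0)$ between tangent spaces is then automatic, since the derivative of any $C^\infty$ map between Banach manifolds is bounded and linear on each fibre.

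To carry this out, first I would write the geodesic equation in Lagrangian coordinates. Right-invariance together with \eqref{euler-arnold} yields a second-order ODE $\ddot\gamma = S(\gamma,\dot\gamma)$ on $\Dex^s(M)$ with
\begin{equation*}
    S(\eta,\dot\eta) \;=\; -\bigl(\ad^{*}_{u} u\bigr)\circ\eta, \qquad u = \dot\eta\circ\eta^{-1},
\end{equation*}
and $\ad^{*}_{u}$ as in \eqref{lie algebra coadjoint}. The task reduces to showing that $S$ extends to a smooth map of $T\Dex^s(M)$ into itself. The obstacle is the one familiar from Euler-Arnold systems: the expression for $\ad^{*}_{u}$ contains a Poisson bracket, so a priori $v\mapsto \ad^{*}_{u} v$ sends $H^s$ only into $H^{s-1}$, losing one derivative. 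If this loss persisted after right-translation by $\eta$, the Lagrangian spray would be only continuous and the argument would collapse.

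The heart of the matter is then an analytic cancellation: when $\ad^{*}_{u} u$ is composed with $\eta$ on the right, commutator identities between the nonlocal inertia operator $(-\Delta)^{1-\beta/2}$ and composition with $\eta$ absorb the missing derivative, leaving a bilinear expression in $(\eta,\dot\eta)$ that is smooth in the ambient $H^s$ topology. The hypothesis $s>3$ is what makes these commutator estimates work: it places $\eta$ in $C^{2}$, so that $R_\eta$ acts boundedly on the relevant Sobolev spaces and on fractional Laplacians. The constraint $0\le\beta\le 1$ keeps the order of $(-\Delta)^{1-\beta/2}$ between one and two, which is the sharp range in which the cancellation is valid. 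This smoothness of the spray is precisely the content of the relevant results in \cite{misiolek2023non}.

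With smoothness of $S$ in hand the proposition follows immediately: its flow is $C^\infty$ on a neighborhood of the zero section of $T\Dex^s(M)$, hence $\exp_e$ is $C^\infty$ on a neighborhood of the origin of $T_e\Dex^s(M)$, and $d\exp_e(tu_0)$ is bounded linear as the derivative of a smooth map between Hilbert manifolds. The only genuine obstacle is establishing the derivative-loss cancellation uniformly for $0\le\beta\le 1$—including the endpoint $\beta=1$, where the inertia operator $(-\Delta)^{1/2}$ is merely of order one—and this is exactly what is furnished by \cite{misiolek2023non}.
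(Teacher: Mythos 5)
Your proposal follows the standard Ebin--Marsden strategy and, like the paper, ultimately defers the essential analytic content --- the no-loss-of-derivatives smoothness of the right-translated spray for the full range $0\le\beta\le1$ --- to \cite{misiolek2023non}; the paper itself gives no proof beyond stating that the proposition is a direct consequence of that reference, so your route is essentially the same. The only small inaccuracy is your formula for the spray: differentiating $\dot\gamma=u\circ\gamma$ produces $\ddot\gamma=\bigl(-\ad^*_u u+u\cdot\nabla u\bigr)\circ\gamma$, so the Lagrangian second-order equation carries an additional transport term, which is however handled by the same cancellation mechanism you describe.
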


In particular, since a standard calculation shows that $d \exp_e(0)$ is the identity, it follows from Proposition \ref{truong} that the exponential map is a local diffeomorphism near the identity map in $\Dex^s(M)$.

A further consequence of this is local well-posedness in the sense of Hadamard of the generalized SQG family \eqref{beta SQG}. To the best of the authors' knowledge, for $0<\beta\leq1$, the situation for global well-posedness is not known. However, a sufficient criterion for global regularity is obtained in \cite{chae2011inviscid}. Our aim now is to study how the parameter $\beta$ affects the analytic and geometric properties of the corresponding exponential maps.

\subsection{Fredholm Properties of the Exponential Maps}\label{beta fredholmness}
Recall that a bounded linear operator $F$ between Banach spaces is Fredholm if its range is closed and its kernel and cokernel are finite-dimensional. The difference of the dimensions of these two spaces is a topological invariant known as the index of $F$. The operator is semi-Fredholm if either of the finiteness requirements on its kernel or cokernel is dropped. Throughout the proceeding arguments, we will also use $\norm{\cdot}_{X}$ to denote the operator norm on the space of bounded linear operators on $X$, which should be clear from the context.

A map $f$ between Banach manifolds is said to be Fredholm if its differential is a Fredholm operator on corresponding tangent spaces. If the source manifold is connected then we define the index of $f$ to be the index of its differential $df$. Standard references for the above are \cite{kato1966pertubation, smale1965an}.

The goal of this section is to establish the following theorem, which extends to fractional orders the results of \cite{misiolek2010fredholm}.

\begin{theorem}\label{fredholmness}
    For $s>3$ and $0\leq \beta<1$, the exponential map on the group of exact volume-preserving diffeomorphisms, $\Dex^s$, induced by the metric \eqref{beta metric} is a non-linear Fredholm map of index 0.
\end{theorem}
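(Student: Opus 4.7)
The plan is to apply Lemma \ref{decomp} and reduce Fredholmness of $d\exp_e(tu_0)$ to that of the operator $\Phi(t) = \Omega(t) + \Gamma(t)$ on the Lie algebra, then to show that $\Omega(t)$ is a bounded linear isomorphism and $\Gamma(t)$ is compact. Consequently $\Phi(t) = \Omega(t)\bigl(I + \Omega(t)^{-1}\Gamma(t)\bigr)$ is an isomorphism composed with a compact perturbation of the identity, which is Fredholm of index zero by the Riesz theory.

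\textbf{Reduction to $\Phi(t)$.} By Lemma \ref{decomp}, $d\exp_e(tu_0) = \tfrac{1}{t}\,d_eL_{\gamma(t)}\,\Phi(t)$. For $s > 3$ and $\gamma(t)^{\pm 1} \in \Dex^s$, composition with $\gamma(t)^{\pm 1}$ preserves the $H^s$-topology on tangent fields, so $d_eL_{\gamma(t)}$ is a bounded linear isomorphism between the relevant tangent spaces. Hence it suffices to prove that $\Phi(t)$ is Fredholm of index zero on $T_e\Dex^s$.

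\textbf{Invertibility of $\Omega(t)$.} Formula \eqref{exact group adjoint} gives $\Ad_\gamma w = \symp(\psi_w \circ \gamma^{-1})$, essentially the pullback of the stream function by $\gamma^{-1}$. Standard Sobolev composition estimates yield that $\Ad_{\gamma(\tau)}$ and $\Ad_{\gamma(\tau)^{-1}}$ are uniformly bounded on $T_e\Dex^s$ (in the equivalent $\dot H^{-\beta/2}$ norm) for $\tau$ in any bounded interval. Thus $\Lambda(\tau) = \Ad^*_{\gamma(\tau)}\Ad_{\gamma(\tau)}$ is bounded, positive self-adjoint, and uniformly bounded below via
\begin{equation*}
    \ip{\Lambda(\tau) w, w} = \norm{\Ad_{\gamma(\tau)} w}^2_{\dot H^{-\beta/2}} \geq c\,\norm{w}^2,
\end{equation*}
so $\Lambda(\tau)^{-1}$ is bounded. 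Integrating, $\Omega(t) = \int_0^t \Lambda(\tau)^{-1}\,d\tau$ is self-adjoint with $\ip{\Omega(t) w, w} \geq c' t\,\norm{w}^2$, hence boundedly invertible on $T_e\Dex^s$.

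\textbf{Compactness of $\Gamma(t)$ --- the main obstacle.} Using $\symp\psi_w = w$ and $\mathrm{div}\,w = 0$, we rewrite the coadjoint in divergence form:
\begin{equation*}
    K_0(w) = \ad^*_w u_0 = \symp(-\Delta)^{\beta/2-1}\,\mathrm{div}\bigl(F\,w\bigr),\qquad F := (-\Delta)^{1-\beta/2}\psi_{u_0} \in H^{s-1+\beta}.
\end{equation*}
The composite $\symp(-\Delta)^{\beta/2-1}\mathrm{div}$ has pseudodifferential order $\beta$ on vector fields, so the smoothing from $(-\Delta)^{\beta/2-1}$ dominates the two first-order operators precisely when $\beta < 1$, yielding a net gain of $1 - \beta > 0$ derivatives compared to the critical SQG case. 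Combined with the algebraic identity $\Ad^*_\eta\,\ad^*_v u = \ad^*_{\Ad_{\eta^{-1}}v}(\Ad^*_\eta u)$ and the coadjoint conservation \eqref{Lie group conservation law}, one obtains the rewriting
\begin{equation*}
    \Lambda(\tau)^{-1}K_0(w) = \Ad_{\gamma(\tau)^{-1}}\,\ad^*_{\Ad_{\gamma(\tau)} w}\,u(\tau),
\end{equation*}
which reduces the analysis to tame Kato--Ponce-type multiplier estimates involving the fixed multiplier $F$ and the evolving state $u(\tau) \in H^s$. These estimates deliver compactness of $\Lambda(\tau)^{-1}K_0\Phi(\tau)$ on $T_e\Dex^s$ for each $\tau$ with operator norm continuous in $\tau$, and hence compactness of the Bochner integral $\Gamma(t)$. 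The gain $1 - \beta$ degenerates as $\beta \to 1$, consistent with Washabaugh's example \cite{washabaugh2016the} of failure of Fredholmness at the SQG endpoint; this is exactly where the bound $\beta < 1$ becomes essential.
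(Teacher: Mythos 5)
Your overall architecture --- the decomposition $\Phi(t)=\Omega(t)+\Gamma(t)$ from Lemma \ref{decomp}, compactness of $\Gamma(t)$, and a lower bound for $\Omega(t)$ --- is the same as the paper's, and your mechanism for compactness is essentially correct: $K_0$ gains $1-\beta$ derivatives, so Rellich--Kondrashov applies exactly when $\beta<1$, matching \eqref{beta rellich}. (The detour through the identity $\Ad^*_\eta\ad^*_v u=\ad^*_{\Ad_{\eta^{-1}}v}\Ad^*_\eta u$ and ``Kato--Ponce multiplier estimates'' is unnecessary; once $K_0$ is compact and $\Lambda(\tau)^{-1}$, $\Phi(\tau)$ are continuous families of bounded operators, the integrand in \eqref{omega and gamma} is compact and so is $\Gamma(t)$.)

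The genuine gap is in your treatment of $\Omega(t)$. Your coercivity estimate
\begin{equation*}
    \ip{\Lambda(\tau)w,w}_{\dot H^{-\beta/2}}=\norm{\Ad_{\gamma(\tau)}w}^2_{\dot H^{-\beta/2}}\geq c\,\norm{w}^2_{\dot H^{-\beta/2}}
\end{equation*}
lives in the \emph{weak} $\dot H^{-\beta/2}$ inner product, which is not equivalent to the $H^s$ norm on $T_e\Dex^s$ (the metric is only a weak Riemannian metric; your parenthetical claim of equivalence is false). Lax--Milgram applied to $\ip{\Omega(t)w,w}\geq c'\,t\,\norm{w}^2$ therefore yields invertibility of $\Omega(t)$ only on the $\dot H^{-\beta/2}$-completion of the algebra, not on $T_e\Dex^s$ with its $H^s$ topology: an operator bounded below in a weaker norm can still fail to have closed range in the stronger one. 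Transferring the lower bound to $\dot H^s$ is the technical heart of the paper's argument: one conjugates by $(-\Delta)^{(2s+\beta)/4}$ and must control the commutator $\bigl[(-\Delta)^{(2s+\beta)/4},\Lambda(\tau)^{-1}\bigr]$, shown via pseudodifferential calculus (conjugation by a sufficiently smooth diffeomorphism preserves the order) to be of order $s-\tfrac{\beta}{2}+1$, i.e.\ strictly lower order. The outcome is only a G{\aa}rding-type inequality
\begin{equation*}
    \ip{w,\Omega(t)w}_{\dot H^s}\gtrsim c_T\norm{w}^2_{\dot H^s}-C_T\norm{w}_{\dot H^s}\norm{w}_{\dot H^{s-1}},
\end{equation*}
not invertibility; accordingly the paper concludes semi-Fredholmness from the resulting estimate and gets index $0$ by continuity of the index from $d\exp_e(0)=\mathrm{id}$, rather than from your factorization $\Omega(t)\bigl(I+\Omega(t)^{-1}\Gamma(t)\bigr)$. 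Two further points you skip: the commutator argument (and the boundedness of $d_eL_{\gamma(t)}v=D\gamma(t)\cdot v$ as a multiplier on $H^s$, which fails for $\gamma$ merely of class $H^s$) requires $u_0\in T_e\Dex^{s'}$ with $s'\gg s$, and the general case $u_0\in T_e\Dex^s$ is then recovered by an approximation argument using the local Lipschitz dependence of $u_0\mapsto d\exp_e(tu_0)$.
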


The value $\beta=1$ is critical in the sense that the exponential map fails to be Fredholm. We will return to this in the following section.

\begin{proof}[Proof of Theorem \ref{fredholmness}]
    We begin by taking initial data $u_0 \in T_e\Dex^{s'}$ for some $s'\gg s$ and letting $\gamma(t)=\exp_e(tu_0)$ denote the corresponding geodesic in $\Dex^{s'}$, which lives for some time $T>0$. From Lemma \ref{decomp}, for any $w \in T_e\Dex^s$ and $t\in[0,T]$, we have the decomposition
    \begin{equation*}
        d\exp_e(tu_0)tw = D\gamma(t) \left(\Omega(t)w + \Gamma(t)w\right)
    \end{equation*}
    where $\Omega(t)$ and $\Gamma(t)$ are given by \eqref{omega and gamma}. Hence we estimate:
    \begin{align}\label{estimate 1}
        \begin{split}
            \norm{d\exp_e(tu_0)tw}_{\dot{H}^s} &\gtrsim \norm{\Omega(t)w}_{\dot{H}^s} - \norm{\Gamma(t)w}_{\dot{H}^s}
        \end{split}
    \end{align}
    where $\gtrsim$ refers to some constant depending on the uniform (in time) $\dot{H}^s$-norm of $D\gamma$ which we suppress for convenience. Focusing on the first term on the right side of \eqref{estimate 1} we have
    \begin{align*}
        \ip{w,\Omega(t)w}_{\dot{H}^s} &= \int_0^t \bigip{w, \Lambda(\tau)^{-1}w}_{\dot{H}^s} \ d\tau \\
        &= \int_0^t \Bigip{(-\Delta)^{\frac{2s+\beta}{4}}w, (-\Delta)^{\frac{2s+\beta}{4}}\Lambda(\tau)^{-1}w}_{\dot{H}^{-\beta/2}} \ d\tau \\
        &= \int_0^t \Bigip{(-\Delta)^{\frac{2s+\beta}{4}}w, \Lambda(\tau)^{-1}(-\Delta)^{\frac{2s+\beta}{4}}w}_{\dot{H}^{-\beta/2}} \ d\tau \\ 
        &\quad + \int_0^t \Bigip{(-\Delta)^{\frac{2s+\beta}{4}}w, [(-\Delta)^{\frac{2s+\beta}{4}}, \Lambda(\tau)^{-1}]w}_{\dot{H}^{-\beta/2}} \ d\tau.
    \end{align*}
From \eqref{exact group adjoint}, \eqref{group coadjoint} and \eqref{lambda and K} we acquire, for any $v \in T_e\Dex^s$
\begin{align}
    \Lambda(\tau) v &= \Ad_{\gamma(\tau)}^{*}\Ad_{\gamma(\tau)}v \\
    \nonumber&= \symp (-\Delta)^{\frac{\beta}{2}-1}R_{\gamma(\tau)}(-\Delta)^{1-\frac{\beta}{2}}R_{\gamma(\tau)}^{-1} \psi_v
\end{align}
which in turn yields
\begin{align}
    \Lambda(\tau)^{-1}v &= \Ad_{\gamma^{-1}(\tau)}\Ad_{\gamma^{-1}(\tau)}^{*}v \\
    \nonumber&= \symp R_{\gamma(\tau)} (-\Delta)^{\frac{\beta}{2}-1}R_{\gamma(\tau)}^{-1}(-\Delta)^{1-\frac{\beta}{2}} \psi_v.
\end{align}
Substituting the above and applying the Schwartz inequality, we have
\begin{align}
    \nonumber \ip{w,\Omega(t)w}_{\dot{H}^s} &= \int_0^t \Bigip{(-\Delta)^{\frac{2s+\beta}{4}}w, \Ad_{\gamma^{-1}}\Ad_{\gamma^{-1}}^{*}(-\Delta)^{\frac{2s+\beta}{4}}w}_{\dot{H}^{-\beta/2}} \ d\tau \\
    \nonumber &\quad+ \int_0^t \Bigip{(-\Delta)^{\frac{2s+\beta}{4}}w, \symp \Big[(-\Delta)^{\frac{2s+\beta}{4}}, R_{\gamma} (-\Delta)^{\frac{\beta}{2}-1}R_{\gamma}^{-1}\Big](-\Delta)^{1-\frac{\beta}{2}} \psi_w}_{\dot{H}^{-\beta/2}} \ d\tau \\
    &\label{Omega estimate} \geq \int_0^t \norm{\Ad_{\gamma^{-1}}^{*}(-\Delta)^{\frac{2s+\beta}{4}}w}_{\dot{H}^{-\beta/2}}^2 \ d\tau \\ 
    \nonumber &\quad - \int_0^t \norm{(-\Delta)^{\frac{2s+\beta}{4}}w}_{\dot{H}^{-\beta/2}}\norm{\symp \Big[(-\Delta)^{\frac{2s+\beta}{4}}, R_{\gamma} (-\Delta)^{\frac{\beta}{2}-1}R_{\gamma}^{-1}\Big](-\Delta)^{1-\frac{\beta}{2}} \psi_w}_{\dot{H}^{-\beta/2}} \ d\tau
\end{align}
To address the first integral, notice that the adjoint $\Ad_\gamma^*$ with respect to the $\dot{H}^{-\beta/2}$ inner product can be expressed in terms of the $L^2$ adjoint $\Ad_\gamma^{0} = D\gamma^\top R_\gamma$ as\footnote{ \ Where $M^\top$ denotes the transpose of the matrix $M$.}
\begin{equation}
    \Ad_\gamma^* = (-\Delta)^{\frac{\beta}{2}} \Ad_{\gamma}^0 (-\Delta)^{-\frac{\beta}{2}}.
\end{equation}
As $\gamma(t)$ is of class $H^{s'}$ we have that $\Ad_{\gamma(t)}^0$ is a bounded operator on $\dot{H}^\frac{\beta}{2}$. Using this we have
\begin{align*}
    \norm{\Big( \Ad_{\gamma(\tau)^{-1}}^{*}(-\Delta)^{\frac{2s+\beta}{4}} \Big)^{-1} v}_{\dot{H}^s} &= \norm{(-\Delta)^{-\frac{2s + \beta}{4}} (-\Delta)^{\frac{\beta}{2}} \Ad_{\gamma(\tau)}^0 (-\Delta)^{-\frac{\beta}{2}} v}_{\dot{H}^s} \\
    &\simeq \norm{\Ad_{\gamma(\tau)}^0 (-\Delta)^{-\frac{\beta}{2}} v}_{\dot{H}^{\frac{\beta}{2}}} \\
    &\lesssim \norm{(-\Delta)^{-\frac{\beta}{2}} v}_{\dot{H}^{ \frac{\beta}{2}}} \\
    &\simeq \norm{v}_{\dot{H}^{-\frac{\beta}{2}}}
\end{align*}
where the suppressed constants above are uniform in time. Hence, for some constant $c_T>0$, we obtain the estimate
\begin{equation*}
    \norm{\Ad_{\gamma(\tau)^{-1}}^{*}(-\Delta)^{\frac{2s+\beta}{4}}w}_{\dot{H}^{-\beta/2}}^2 \geq c_T \norm{w}_{\dot{H}^s}^2
\end{equation*}
for any $w \in T_e\Dex^s$.

Regarding the second integral in \eqref{Omega estimate}, observe that
\begin{align*}
    \Big[(-\Delta)^{\frac{2s+\beta}{4}}, R_{\gamma} (-\Delta)^{\frac{\beta}{2}-1}R_{\gamma}^{-1}\Big] &= R_{\gamma}\Big[R_{\gamma}^{-1}(-\Delta)^{\frac{2s+\beta}{4}}R_{\gamma}, (-\Delta)^{\frac{\beta}{2}-1}\Big]R_{\gamma}^{-1} \\
    &= R_{\gamma}(-\Delta)^{\frac{\beta}{2}-1}\Big[(-\Delta)^{1-\frac{\beta}{2}}, R_{\gamma}^{-1}(-\Delta)^{\frac{2s+\beta}{4}}R_{\gamma}\Big](-\Delta)^{\frac{\beta}{2}-1}R_{\gamma}^{-1}.
\end{align*}
Since the conjugation $R_{\gamma}^{-1}(-\Delta)^{\frac{2s+\beta}{4}}R_{\gamma}$ with a diffeomorphism $\gamma$ is a pseudo differential operator of order $s+\frac{\beta}{2}$, the commutator on the right-hand side above is a pseudo differential operator of order $s-\frac{\beta}{2}+1$, cf. \cite{treves2022analytic}. Hence there exists a constant $C_T>0$ such that
\begin{align}
    \label{Omega estimate 2}\ip{w,\Omega(t)w}_{\dot{H}^s} &\gtrsim c_T\norm{w}_{\dot{H}^s}^2 - C_T \norm{w}_{\dot{H}^s} \bignorm{(-\Delta)^{1-\frac{\beta}{2}} \psi_w}_{\dot{H}^{s+\beta-2}} \\
    \nonumber&\gtrsim c_T \norm{w}_{\dot{H}^s}^2 - C_T \norm{w}_{\dot{H}^s} \norm{w}_{\dot{H}^{s-1}}.
\end{align}

We next turn our attention to the operator $\Gamma(t)$ in \eqref{estimate 1}. From \eqref{lambda and K} and \eqref{lie algebra coadjoint} we have
\begin{align}\label{beta rellich}
    \nonumber \norm{K_0 w}_{\dot{H}^{s+1-\beta}} &= \bignorm{\{(-\Delta)^{1-\frac{\beta}{2}}\psi_{u_0}, \psi_w\}}_{\dot{H}^s} \\
    &\lesssim \bignorm{\nabla (-\Delta)^{1-\frac{\beta}{2}}\psi_{u_0}}_{\mathcal{C}^\rho} \bignorm{\symp \psi_w}_{\dot{H}^s} \\
    \nonumber &\lesssim \norm{w}_{\dot{H}^s}
\end{align}
for any $w \in T_e\Dex^s$ where $\mathcal{C}^\rho$ is the H{\"o}lder-Zygmund space with $\rho>s$, cf. \cite{triebel1983theory}. Hence, for $0\leq \beta < 1$, $K_0$ is a compact operator by the Rellich-Kondrashov lemma. As $\Lambda(\tau)^{-1}$ and $\Phi(\tau)$ are continuous families of bounded linear operators, the integrand in \eqref{omega and gamma} is a compact operator. Hence, $\Gamma(t)$ is itself compact. Combining \eqref{Omega estimate 2} and \eqref{beta rellich}, we arrive at the estimate
\begin{equation}\label{smooth estimate}
    \norm{d\exp_e(tu_0)tw}_{\dot{H}^s} \gtrsim c_T\norm{w}_{\dot{H}^s} - C_T\norm{w}_{\dot{H}^{s-1}} - \norm{\Gamma(t)w}_{\dot{H}^s},
\end{equation}
for $u_0 \in T_e\Dex^{s'}$ and $w \in T_e\Dex^s$.

If we now assume that $u_0 \in T_e\Dex^s$, then for any $\varepsilon>0$ we may approximate $u_0$ by a more regular field $\tilde{u}_0 \in T_e\Dex^{s'}$ such that $\norm{u_0-\tilde{u_0}}_{\dot{H}^s} < \varepsilon$. As the exponential map is smooth in the $\dot{H}^s$ topology, its derivative $u_0 \mapsto d\exp_e(tu_0)$ must depend smoothly on $u_0$ and therefore be locally Lipschitz. Hence, from the triangle inequality and \eqref{smooth estimate} with appropriately modified $\tilde{c}_T$, $\tilde{C}_T$ and $\tilde{\Gamma}(t)$, we have
\begin{align*}
    \norm{d\exp_e(tu_0)tw}_{\dot{H}^s} &\geq \norm{d\exp_e(t\tilde{u}_0)tw}_{\dot{H}^s} - \norm{d\exp_e(tu_0)tw-d\exp_e(t\tilde{u}_0)tw}_{\dot{H}^s} \\
    &\gtrsim \tilde{c}_T\norm{w}_{\dot{H}^s} - \tilde{C}_T\norm{w}_{\dot{H}^{s-1}} - \bignorm{\tilde{\Gamma}(t)w}_{\dot{H}^s} \\
    &\quad - t\norm{d\exp_e(tu_0)-d\exp_e(t\tilde{u}_0)}_{L(\dot{H}^s)} \norm{w}_{\dot{H}^s} \\
    &\geq \tilde{c}_T\norm{w}_{\dot{H}^s} - \tilde{C}_T\norm{w}_{\dot{H}^{s-1}} - \bignorm{\tilde{\Gamma}(t)w}_{\dot{H}^s} - T \mathcal{C}_T \varepsilon \norm{w}_{\dot{H}^s}
\end{align*}
where $\mathcal{C}_T$ denotes a uniform (in time) upper bound on the Lipschitz constant of $u_0 \mapsto d\exp_e(tu_0)$. As $\varepsilon>0$ is arbitrary we then have
\begin{equation}\label{non-smooth estimate}
    \norm{d\exp_e(tu_0)tw}_{\dot{H}^s} \gtrsim \tilde{c}_T\norm{w}_{\dot{H}^s} - \tilde{C}_T\norm{w}_{\dot{H}^{s-1}} - \bignorm{\tilde{\Gamma}(t)w}_{\dot{H}^s}.
\end{equation}

With this estimate in hand, it now follows that $d\exp_e(tu_0)$ has finite-dimensional kernel and closed range in $T_{\gamma(t)}\Dex^s$. The argument is standard. To see why this is true note that, if $d\exp_e(tu_0)w_k$ converges in $\dot{H}^s$ with $w_k$ a bounded sequence in $\dot{H}^s$, then $w_k$ must contain a subsequence which converges in $\dot{H}^{s-1}$ as well as a subsequence for which $\tilde{\Gamma}(t)w_k$ converges in $\dot{H}^s$. We may assume without loss of generality that both subsequences are simply $w_k$. Substituting $w_N - w_M$ into \eqref{non-smooth estimate}, we have
\begin{align*}
    \norm{w_N - w_M}_{\dot{H}^s} &\lesssim \norm{d\exp_e(tu_0)t w_N - d\exp_e(tu_0)t w_M}_{\dot{H}^s} \\
    &\quad + \norm{w_N-w_M}_{\dot{H}^{s-1}} + \bignorm{\tilde{\Gamma}(t)(w_N-w_M)}_{\dot{H}^s}.
\end{align*}
Consequently $w_k$ is Cauchy in $\dot{H}^s$ and hence converges. Now, if the kernel were infinite-dimensional it would contain an orthonormal sequence with a convergent subsequence, which is a contradiction. Hence, $\dim \ker d\exp_e(tu_0)<\infty$.

Next we decompose $T_e\Dex^s$ into a $\dot{H}^s$ orthogonal sum $\ker(d\exp_e(tu_0))\oplus X^s_{u_0}$. There exists a constant $c>0$ such that, for all $w$ in $X^s_{u_0}$, we have $\norm{w}_{\dot{H}^s} \leq c \norm{d\exp_e(tu_0)w}_{\dot{H}^s}$ which implies closed range. If such a constant did not exist then there would be a sequence $w_k$ in $X^s_{u_0}$ such that $\norm{w_k}_{\dot{H}^s}=1$ and $\norm{d\exp_e(tu_0)w_k}_{\dot{H}^s}<\frac{1}{k}$ containing a convergent subsequence. Passing to the limit and using the fact that $X^s_{u_0}$ is closed, this would imply that there exists $w_\infty \in X^s_{u_0}$ with $\norm{w_\infty}_{\dot{H}^s}=1$ and $d\exp_e(tu_0)w_\infty = 0$, a contradiction.

The above shows that, for $u_0 \in T_e\Dex^s$, the operator $d\exp_e(tu_0)$ is semi-Fredholm. Finally, as at $t=0$ $d\exp_e(0)$ is the identity on $T_e\Dex^s$, the theorem follows from the fact that the index function is continuous on the space of semi-Fredholm operators.
\end{proof}

\subsection{Conjugate Points and the Breakdown of Fredholmness}

As mentioned in the Introduction, the distinction between two- and three-dimensional ideal hydrodynamics can be observed through the Fredholm properties of the $L^2$ exponential map. For example, Fredholmness in the 2D case precludes accumulation of conjugate points along finite geodesic segments, while such clustering is known to occur in the 3D setting, cf. \cite{ebin2006singularities, preston2006on}. In this section we amplify Theorem \ref{fredholmness} by establishing, via the Hydrodynamical Morse Index Theorem, cf. \cite{misiolek2010fredholm}, an upper bound on the number of conjugate points along a finite geodesic segment for the $\beta < 1$ metrics; precluding clusters of such points. As the interpolation parameter approaches the critical value $\beta=1$, this upper bound approaches infinity. We also provide an explicit example to illustrate that this bound is attained. This shows that the breakdown of Fredholmness occurs precisely at $\beta=1$; in analogy with 3D ideal hydrodynamics.
\begin{theorem}\label{counting}
    Let $0\leq \beta < 1$. Given any initial velocity $u_0 \in T_e\Dex^s$ let $\gamma(t)$ be the corresponding geodesic of the metric \eqref{beta metric}, which exists for at least some time $T>0$. The number of conjugate points along the segment $\gamma \vert_{[0,T]}$ is bounded above by $\aleph_\beta = \aleph_\beta(u_0, T)<\infty$.
\end{theorem}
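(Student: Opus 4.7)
The plan is to apply the Hydrodynamical Morse Index Theorem of \cite{misiolek2010fredholm}, which in the Fredholm regime identifies the total multiplicity of conjugate points along $\gamma\vert_{[0,T]}$ with the Morse index of the energy functional restricted to that segment. Since Theorem \ref{fredholmness} establishes the Fredholm property of $d\exp_e$ for $0\leq\beta<1$, this theorem applies and reduces the claim to exhibiting a finite upper bound on the Morse index that depends only on $u_0$ and $T$.

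To this end, I would represent the Hessian of the energy along $\gamma$ as a bounded self-adjoint operator $\mathcal{L}$ on the Hilbert space of admissible variation fields vanishing at both endpoints. Appealing to Lemma \ref{decomp}, the natural candidate splits as $\mathcal{L} = A + B$, where $A$ is positive definite--arising from $\Omega(T) = \int_0^T \Lambda(\tau)^{-1}\,d\tau$, which is coercive because $\Lambda(\tau)$ is a bounded positive self-adjoint family uniformly in $\tau \in [0,T]$--and $B$ is a self-adjoint perturbation encoded by $\Gamma(T) = \int_0^T \Lambda(\tau)^{-1}K_{0}\Phi(\tau)\,d\tau$. The crucial compactness of $B$ is inherited from that of $K_{0}$, already established in the proof of Theorem \ref{fredholmness}: for $\beta < 1$ the estimate $\|K_{0} w\|_{\dot{H}^{s+1-\beta}} \lesssim \|w\|_{\dot{H}^s}$ combined with Rellich-Kondrashov yields compactness of $K_{0}$ on $\dot{H}^s$, and this compactness propagates through the integral defining $\Gamma(T)$ by the continuous dependence of $\Lambda(\tau)^{-1}$ and $\Phi(\tau)$ on $\tau \in [0,T]$.

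Once the decomposition $\mathcal{L} = A + B$ is in place with $A \geq cI > 0$ and $B$ compact self-adjoint, the min-max principle guarantees that $\mathcal{L}$ has only finitely many negative eigenvalues (counted with multiplicity), bounded above by the number of eigenvalues of $B$ lying below $-c$. Both $\|B\|$--which I would control in terms of $\|\nabla (-\Delta)^{1-\beta/2} \psi_{u_0}\|_{\mathcal{C}^\rho}$ and $T$ as in \eqref{beta rellich}--and the coercivity constant $c$ depend only on $u_0$ and $T$, and this count furnishes the desired finite $\aleph_\beta(u_0,T)$. The principal obstacle I anticipate is the careful identification of the second-variation form with the $A+B$ structure in the weak Riemannian setting on $\Dex^s$, where the $\dot{H}^{-\beta/2}$ topology is strictly weaker than the ambient $H^s$ topology; this is precisely where the index-theoretic machinery developed in \cite{misiolek2010fredholm} becomes indispensable. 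As $\beta \to 1^-$ the gain $1-\beta$ in the $K_{0}$ estimate degenerates, so the spectral bound on $B$ deteriorates and $\aleph_\beta$ blows up, consistent with the criticality of $\beta = 1$ flagged in the introduction.
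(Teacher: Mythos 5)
Your overall architecture---invoke the Hydrodynamical Morse Index Theorem to convert the count of conjugate points into the Morse index of the second variation, then show that index is finite by writing the relevant quadratic form as a coercive part plus a compact perturbation coming from $K_0$---is the strategy of the paper, and your identification of the compactness of $K_0$ (the gain of $1-\beta$ derivatives, degenerating as $\beta \to 1$) as the decisive ingredient is on target. The genuine gap is in the middle step: the decomposition $\mathcal{L}=A+B$ you propose to extract from Lemma \ref{decomp} is not a decomposition of the Hessian of the energy. The operators $\Omega(T)$ and $\Gamma(T)$ in \eqref{omega and gamma} act on $T_e\Dex^s$ and decompose the solution operator $\Phi(T)$ of the Jacobi equation, i.e.\ (a left-translate of) $d\exp_e(Tu_0)$; the index form $I_\beta$ is a bilinear form on the path space $\dot{\mathscr{H}}^1_\beta$ of variation fields vanishing at both endpoints, an entirely different space. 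You flag ``the careful identification of the second-variation form with the $A+B$ structure'' as the principal obstacle, but that identification is precisely the content that is missing, and it does not come from Lemma \ref{decomp}.

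What supplies it in the paper is Lemma \ref{rewrite index} (from \cite{preston2006on}), which rewrites $I_\beta(W,W)$ as $\int_0^T \big( \norm{\Ad_\gamma \partial_t w}_{\dot{H}^{-\beta/2}}^2 + \ip{K_0(w),\partial_t w}_{\dot{H}^{-\beta/2}} \big)\,dt$. The first term is coercive on $\dot{\mathscr{H}}^1_\beta$ (with constant $\delta=\inf_{t}\norm{\Ad_\gamma^{-1}}_{L^2}^{-2}$) and the cross term carries all the $K_0$-dependence; from that starting point your abstract min--max argument (coercive plus compact self-adjoint has finite negative index) would indeed close the proof of finiteness, so the strategy is salvageable once the correct decomposition is in hand. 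The paper instead proceeds concretely: it substitutes $w=e^{-\frac{t}{2\delta}K_0}v$, using that $K_0$ is anti-self-adjoint so that $e^{-\frac{t}{2\delta}K_0}$ is $\dot{H}^{-\beta/2}$-orthogonal, to complete the square and reduce to $\int_0^T\big(\delta\norm{\partial_t v}_{\dot{H}^{-\beta/2}}^2-\tfrac{1}{4\delta}\norm{K_0(v)}_{\dot{H}^{-\beta/2}}^2\big)\,dt$, then expands in $\sin(k\pi t/T)$ in time and in Laplace eigenfunctions in space and counts the failing modes via the Weyl law. That extra work is what produces the explicit value of $\aleph_\beta(u_0,T)$ and hence the corollary that $\aleph_\beta\to\infty$ as $\beta\to1$; your abstract bound would prove the theorem as stated but not the quantitative blow-up.
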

As in \cite{misiolek2010fredholm} consider the set of smooth (in time) vector fields along $\gamma\vert_{[0,T]}$ which vanish at the endpoints 
\begin{equation}
\Big\{V: [0,T] \xrightarrow{C^\infty} T\Dex^s \ \vert \ V(t) \in T_{\gamma(t)}\Dex^s \text{ with } V(0)=V(T)=0\Big\}.
\end{equation}
Let $\dot{\mathscr{H}}^1_\beta$ denote the completion of this space under the norm induced by the inner product
$$ \langle V, W\rangle_{\dot{\mathscr{H}}^1_\beta} = \int_0^T \langle \nabla_{\dot{\gamma}} V , \nabla_{\dot{\gamma}} W \rangle_{\dot{H}^{-\beta/2}} \, dt$$

\n where $\nabla_{\dot{\gamma}}$ denotes the covariant derivative operator of the $\dot{H}^{-\beta/2}$ metric in $\Dex^s$. The index form along $\gamma$ is then a symmetric bilinear operator on $\dot{\mathscr{H}}^1_\beta \times \dot{\mathscr{H}}^1_\beta$ given by
\begin{equation}\label{index form}
    I_\beta(V,W) = \int_0^T \Big( \ip{\nabla_{\dot{\gamma}} V , \nabla_{\dot{\gamma}} W}_{\dot{H}^{-\beta/2}} - \ip{\mathcal{R}_\gamma (V) ,W}_{\dot{H}^{-\beta/2}} \Big) \ dt
\end{equation}
where $\mathcal{R}_\gamma(\cdot)=\mathcal{R}(\cdot, \dot{\gamma})\dot{\gamma}$ is the curvature tensor of the $\dot{H}^{-\beta/2}$ metric along $\gamma$ in $\Dex^s$. We recall the following result from \cite{preston2006on}.
\begin{lemma}\label{rewrite index}
    The index form \eqref{index form} can be rewritten as
    \begin{equation}\label{index form rewritten}
        I_\beta(V ,W) = \int_0^T \Big( \ip{\Ad_{\gamma}\partial_t v, \Ad_{\gamma} \partial_t w}_{\dot{H}^{-\beta/2}} + \ip{K_0(v), \partial_t w}_{\dot{H}^{-\beta/2}} \Big) \, dt
    \end{equation}
    where $V = d_eL_\gamma v$, $W = d_eL_\gamma w$ with $v, w \in T_e\Dex^s$ and $K_0$ is given in \eqref{lambda and K}.
\end{lemma}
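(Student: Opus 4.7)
The plan is to verify that $I_\beta(V,W)$ equals the right-hand side of \eqref{index form rewritten} by independently integrating by parts in $t$ on both expressions and then matching the resulting integrands via the left-trivialization of the Jacobi operator encoded in \eqref{left jacobi field}. The hypothesis $V(0) = V(T) = W(0) = W(T) = 0$, combined with invertibility of $d_eL_{\gamma(t)}$, forces $v(0)=v(T)=w(0)=w(T)=0$, so boundary contributions vanish throughout.

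On the proposed right-hand side \eqref{index form rewritten}, I would first use the adjoint identity $\ip{\Ad_\gamma a, \Ad_\gamma b}_{\dot{H}^{-\beta/2}} = \ip{\Lambda(t) a, b}_{\dot{H}^{-\beta/2}}$ and then integrate by parts in $t$ to convert $\int_0^T \ip{\Ad_\gamma \partial_t v, \Ad_\gamma \partial_t w}_{\dot{H}^{-\beta/2}} \, dt$ into $-\int_0^T \ip{\partial_t(\Lambda \partial_t v), w}_{\dot{H}^{-\beta/2}} \, dt$. Time-independence of $u_0$ gives $\partial_t K_0(v) = \ad^*_{\partial_t v} u_0$, so an analogous integration by parts converts the second integrand into $-\int_0^T \ip{\ad^*_{\partial_t v} u_0, w}_{\dot{H}^{-\beta/2}} \, dt$. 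Meanwhile, on the original index form \eqref{index form}, metric compatibility of $\nabla_{\dot{\gamma}}$ together with the vanishing of $W$ at the endpoints gives
\[
I_\beta(V,W) = -\int_0^T \ip{\nabla^2_{\dot{\gamma}}V + \mathcal{R}(V,\dot{\gamma})\dot{\gamma}, W}_{\dot{H}^{-\beta/2}} \, dt,
\]
which, by right-invariance of the metric and $W = d_eL_\gamma w$, equals
\[
-\int_0^T \bigip{\Ad^*_\gamma\big([\nabla^2_{\dot{\gamma}}V + \mathcal{R}(V,\dot{\gamma})\dot{\gamma}] \circ \gamma^{-1}\big), w}_{\dot{H}^{-\beta/2}} \, dt.
\]

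The two expressions therefore coincide provided the operator identity
\[
\Ad^*_\gamma\big([\nabla^2_{\dot{\gamma}}V + \mathcal{R}(V,\dot{\gamma})\dot{\gamma}] \circ \gamma^{-1}\big) = \partial_t\big(\Lambda(t) \partial_t v\big) + \ad^*_{\partial_t v} u_0
\]
holds for $V = d_eL_\gamma v$. This identity is precisely the left-trivialization of the Jacobi operator along $\gamma$, whose derivation in \cite{misiolek2010fredholm} relies on Arnold's formulas for the Levi-Civita connection and curvature of right-invariant metrics together with the geodesic equation $\partial_t u + \ad^*_u u = 0$. Establishing it is the main obstacle, as it requires careful bookkeeping of the interplay between $\Ad_\gamma$, $\ad^*$, and time differentiation along $\gamma$; however, since the same calculation underlies the left-trivialized Jacobi system \eqref{left jacobi field}, I would invoke it directly from \cite{misiolek2010fredholm} rather than reproducing it.
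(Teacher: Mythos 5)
Your outline is correct: the two integrations by parts are valid under the stated endpoint conditions, the identity $\ip{\Ad_\gamma a,\Ad_\gamma b}_{\dot H^{-\beta/2}}=\ip{\Lambda(t)a,b}_{\dot H^{-\beta/2}}$ and the relation $\partial_t K_0(v)=\ad^*_{\partial_t v}u_0$ are exactly right, and the remaining operator identity you isolate is precisely the left-trivialization of the Jacobi operator underlying \eqref{left jacobi field}. The paper gives no proof of this lemma at all --- it is recalled verbatim from \cite{preston2006on} --- so your reduction to that reference's computation is consistent with, and in fact more explicit than, the paper's own treatment.
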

As in finite-dimensional Riemannian geometry, the Hydrodynamical Morse Index Theorem in \cite{misiolek2010fredholm} states that the number of conjugate points along the geodesic segment $\gamma\big\vert_{[0,T]}$ is equal to the dimension of the space on which this index form is negative definite. Hence, it will suffice to construct a finite-dimensional subspace, $X_\beta$, of $\dot{\mathscr{H}}^1_\beta$ such that the index form $I_\beta$ is non-negative on its $\dot{H}^{-\beta/2}$-orthogonal complement, $Y_\beta$. We proceed to the proof of the main theorem.
\begin{proof}[Proof of Theorem ~\ref{counting}]
    Using Lemma \ref{rewrite index}, for $W = d_eL_\gamma w \in \dot{\mathscr{H}}^1_\beta$ we have
    \begin{align}\label{index estimate 1}
        I_\beta(W, W) &= \int_0^T \Big( \norm{\Ad_\gamma \partial_t w}_{\dot{H}^{-\beta/2}}^2 + \ip{K_0(w), \partial_t w}_{\dot{H}^{-\beta/2}} \Big) \ dt \\
        \nonumber &\geq \int_0^T \Big( \delta \norm{\partial_t w}_{\dot{H}^{-\beta/2}}^2 + \ip{K_0(w), \partial_t w}_{\dot{H}^{-\beta/2}} \Big) \ dt
    \end{align}
    where $\displaystyle \delta = \inf_{t \in [0,T]} \norm{\Ad_\gamma^{-1}}_{L^2}^{-2} > 0$ is independent of $\beta$.
    
    In order to simplify the above expression we let $w(t)=e^{-\frac{t}{2\delta}K_0}v(t)$ for some vector field $v(t)$. Clearly both fields vanish on the same subset of $[0,T]\times M$. Furthermore, since $K_0$ is anti-symmetric in the $\dot{H}^{-\beta/2}$ inner product, we have that $e^{-\frac{t}{2\delta}K_0}$ is an orthogonal operator and hence preserves the $\dot{H}^{-\beta/2}$ norm. Hence from \eqref{index estimate 1} we have
    \begin{align*}
        I_\beta(W, W) &\geq \int_0^T \bigg( \delta \Bignorm{e^{-\frac{t}{2\delta}K_0}\Big(\partial_t v - \frac{1}{2\delta}K_0(v)\Big)}_{\dot{H}^{-\beta/2}}^2 \\
        &\quad + \Bigip{e^{-\frac{t}{2\delta}K_0} K_0(v), e^{-\frac{t}{2\delta}K_0}\Big(\partial_t v - \frac{1}{2\delta}K_0(v)\Big)}_{\dot{H}^{-\beta/2}} \bigg) \ dt \\
        &= \int_0^T \bigg( \delta \bignorm{\partial_t v - \frac{1}{2\delta}K_0(v)}_{\dot{H}^{-\beta/2}}^2 + \bigip{K_0(v), \partial_t v - \frac{1}{2\delta}K_0(v)}_{\dot{H}^{-\beta/2}} \bigg) \ dt \\
        &=\int_0^T \bigg( \delta \norm{\partial_t v}_{\dot{H}^{-\beta/2}}^2 - \frac{1}{4\delta}\norm{K_0(v)}_{\dot{H}^{-\beta/2}}^2 \bigg) \ dt .
    \end{align*}
    Expanding $\displaystyle v(t,x)=\sum_{k=1}^{\infty} \sin\left(\frac{k \pi t}{T}\right)v_k(x)$ where $v_k$ are time-independent fields we then have
    \begin{align*}
        I_\beta(W, W) &\geq \sum_{k=1}^\infty  \int_0^T \bigg( \frac{\delta k^2 \pi^2}{T^2} \cos^2\left(\frac{k \pi t}{T}\right) \norm{v_k}_{\dot{H}^{-\beta/2}}^2 - \frac{1}{4\delta} \sin^2\left(\frac{k \pi t}{T}\right) \norm{K_0(v_k)}_{\dot{H}^{-\beta/2}}^2\bigg) \ dt \\
        &= \frac{T}{2} \sum_{k=1}^\infty \left( \frac{\delta k^2 \pi^2}{T^2} \norm{v_k}_{\dot{H}^{-\beta/2}}^2 - \frac{1}{4\delta} \norm{K_0(v_k)}_{\dot{H}^{-\beta/2}}^2 \right).
    \end{align*}
    Expressing each $v_k = \symp \psi_k$ and using an analogous estimate to \eqref{beta rellich}, namely
    \begin{align}\label{low beta rellich}
    \norm{K_0 w}_{\dot{H}^{-\frac{\beta}{2}}} &\lesssim \bignorm{\nabla (-\Delta)^{1-\frac{\beta}{2}}\psi_{u_0}}_{\mathcal{C}^\rho} \bignorm{\symp \psi_w}_{\dot{H}^{-\frac{\beta}{2}}} \\
    \nonumber &\lesssim \norm{w}_{\dot{H}^{-\frac{\beta}{2}}}
    \end{align}
    for any $w \in T_e\Dex^s$ and $\rho>1+\frac{\beta}{2}$, we have
    \begin{align}\label{index form est}
        I_\beta(W, W) &\geq \frac{T}{2} \sum_{k=1}^\infty \left( \frac{\delta k^2 \pi^2}{T^2} \norm{\psi_k}_{\dot{H}^{1-\beta/2}}^2 - \frac{1}{4\delta} \norm{K_0(\symp \psi_k)}_{\dot{H}^{-\beta/2}}^2 \right) \\
        \nonumber &\geq \frac{T}{2} \sum_{k=1}^\infty \left( \frac{\delta k^2 \pi^2}{T^2} \norm{\psi_k}_{\dot{H}^{1-\beta/2}}^2 - \frac{C}{4\delta} \norm{\psi_k}_{\dot{H}^{\beta/2}}^2 \right),
    \end{align}
    with $C>0$ is a constant (independent of $\beta$) and $u_0 = \symp \psi_0 \in T_e\Dex^s$.
    
    Finally, we expand each $\displaystyle \psi_k(x) = \sum_{n=1}^\infty a_{k,n} \phi_n(x)$ where $a_{k,n}$ are constants and $\phi_n$ are non-constant eigenfunctions of the Laplacian with eigenvalues $0<\lambda_n \nearrow \infty$, normalized so that $\norm{\phi_n}_{L^2}=1$ and for any $\alpha \in \R$ we have
    \begin{align*}
        \norm{\psi_k}_{\dot{H}^\alpha}^2 &= \norm{(-\Delta)^{\frac{\alpha}{2}}\psi_k}_{L^2}^2 = \Bignorm{\sum_{n=1}^\infty a_{k,n} \lambda_n^{\frac{\alpha}{2}} \phi_n}_{L^2}^2 = \sum_{n=1}^\infty a_{k,n}^2 \lambda_n^{\alpha}.
    \end{align*}
    The estimate \eqref{index form est} becomes
    \begin{align*}
        I_\beta(W, W) &\geq \frac{T}{2} \sum_{k=1}^\infty \sum_{n=1}^\infty a_{k,n}^2 \lambda_n^{\frac{\beta}{2}}\left( \frac{\delta k^2 \pi^2}{T^2} \lambda_n^{1-\beta}- \frac{C}{4\delta} \right).
    \end{align*}
    It follows that if
    \begin{equation*}
        \frac{\delta k^2 \pi^2}{T^2} \lambda_n^{1-\beta}- \frac{C}{4\delta} \geq 0
    \end{equation*}
    for each $(k,n)$ such that $a_{k,n}\neq 0$, then we have $I_\beta(W,W)\geq 0$. This condition is equivalent to
    \begin{equation}\label{lambda requirement}
        \lambda_n \geq \left(\frac{CT^2}{4\delta^2k^2\pi^2}\right)^{\frac{1}{1-\beta}}.
    \end{equation}
    Notice that, for fixed $\beta<1$, as $\lambda_n \nearrow \infty$ and since the right hand side of \eqref{lambda requirement} is a decreasing function of $k$, there can be only finitely many $k$, say $1 \leq k \leq k_{\text{max}}$, for which \eqref{lambda requirement} fails to hold for all $n\geq1$. If we consider such $k$, we can approximate the number of $n$ for which \eqref{lambda requirement} does not hold using the Weyl Law \cite{weyl1912das} which states that 
    \begin{equation*}
        N(\lambda)=\#\{n \ \vert \ \lambda_n \leq \lambda\} \approx \frac{\mu(M)}{2\pi}\lambda + O(1).
    \end{equation*}
    Let $A=\big\{(k,n) \ \vert \ \eqref{lambda requirement} \text{ does not hold}\big\}$ and $X_A = \text{span}\big\{\sin\left(\frac{k \pi t}{T}\right)\phi_n\big\}_{(k,n) \in A}$ with closure under the $\dot{H}^{-\beta/2}$ norm denoted by $X_{A,\beta}$. Observe now that
    \begin{align*}
        \aleph_\beta &:= \dim X_{A,\beta} = \# A = \sum_{k=1}^{k_{\text{max}}} N\left(\left(\frac{CT}{4\delta^2k^2\pi^2}\right)^{\frac{1}{1-\beta}}\right) \approx \sum_{k=1}^{k_{\text{max}}} \left(\frac{CT}{4\delta^2k^2\pi^2}\right)^{\frac{1}{1-\beta}} + O(1) < \infty.
    \end{align*}
    Finally, since we have the $\dot{H}^{-\beta/2}$ orthogonal decomposition $\dot{\mathscr{H}}^1_\beta = X_{A,\beta} \oplus Y_{A,\beta}$ with $I_\beta \big\vert_{Y_{A,\beta}} \geq 0$, the number of conjugate points along $\gamma \vert_{[0,T]}$ is bounded above by $\aleph_\beta$.
\end{proof}
An immediate consequence of the above is
\begin{corollary} We have
    \begin{equation*}
        \lim_{\beta \rightarrow 1}\aleph_\beta = \lim_{\beta \rightarrow 1} \sum_{k=1}^{k_{\text{max}}} \left(\frac{CT}{4\delta^2k^2\pi^2}\right)^{\frac{1}{2-2\beta}} + O(1) = \infty
    \end{equation*}
\end{corollary}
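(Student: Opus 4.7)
The plan is to read the limit directly off the explicit asymptotic expression for $\aleph_\beta$ obtained at the very end of the proof of Theorem \ref{counting}. The central feature of that expression is that the exponent $\frac{1}{1-\beta}$ (equivalently $\frac{1}{2-2\beta}$ up to the factor of $2$ appearing in the Corollary statement) blows up as $\beta \nearrow 1$, while the base of each summand is a constant independent of $\beta$. So the whole argument amounts to observing that a fixed constant strictly greater than one, raised to an exponent tending to infinity, tends to infinity.

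The first step I would carry out is to isolate the dominant term. From the proof of Theorem \ref{counting} one has
$$\aleph_\beta \;\approx\; \sum_{k=1}^{k_{\text{max}}} \left(\frac{CT^2}{4\delta^2 k^2 \pi^2}\right)^{\frac{1}{1-\beta}} + O(1),$$
and since every summand is non-negative, discarding all but the $k=1$ contribution yields the lower bound
$$\aleph_\beta \;\gtrsim\; \left(\frac{CT^2}{4\delta^2\pi^2}\right)^{\frac{1}{1-\beta}} + O(1).$$
Once the base of this expression is strictly greater than $1$, the exponent $\frac{1}{1-\beta}\to+\infty$ forces the right-hand side, and hence $\aleph_\beta$, to diverge.

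The only technical point requiring verification is that $k_{\text{max}}\geq 1$ remains valid as $\beta\nearrow 1$, so that the sum is not eventually empty. This reduces to checking that \eqref{lambda requirement} continues to fail for some eigenvalue $\lambda_n$ when $k=1$; that is, the threshold $\bigl(CT^2/(4\delta^2\pi^2)\bigr)^{1/(1-\beta)}$ must exceed some $\lambda_n$. Under the same hypothesis that the base exceeds $1$, this threshold itself tends to $+\infty$, so the failure condition is trivially preserved in the limit. The Weyl-law remainder $O(1)$ contributes nothing.

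The main conceptual obstacle, if one wishes to be careful, is interpretive rather than analytic: the statement should be read in the regime of $T$ for which the base of the $k=1$ term exceeds unity — that is, for geodesic segments long enough to reasonably admit conjugate points. In the opposite regime of very short $T$ (or very small $\delta^{-1}$) the upper bound could collapse to zero, but in that regime there are no conjugate points along $\gamma\vert_{[0,T]}$ at all, so the content of the Corollary is vacuous. In the non-trivial regime, the divergence of the upper bound makes precise the heuristic that the $\beta=1$ endpoint is qualitatively different: the finite Morse index bound of Theorem \ref{counting} degenerates, consistent with the loss of Fredholmness at $\beta=1$ established earlier.
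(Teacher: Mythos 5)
Your argument is correct and is essentially the paper's own reasoning: the paper offers no proof beyond declaring the corollary an immediate consequence of the closed-form expression for $\aleph_\beta$ obtained at the end of the proof of Theorem \ref{counting}, and, like you, it simply reads off that each summand has a $\beta$-independent base raised to an exponent that diverges as $\beta \to 1$. Your additional caveat --- that the divergence genuinely requires the $k=1$ base to exceed $1$, i.e.\ that $T$ be large enough for the bound to be non-vacuous, since otherwise the expression tends to $0$ rather than $\infty$ --- is a real point that the paper passes over silently and is worth recording.
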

We conclude with an example illustrating the number of conjugate points along a finite geodesic segment approaching infinity as $\beta \rightarrow 1$.

\begin{example}
    Following Washabaugh \cite{washabaugh2016the} we consider the case of $M={\mathbb{S}}^2$ with spherical coordinates $(\vartheta,\varphi)$. The rotation $\partial_\vartheta$ is then a steady state solution to \eqref{beta SQG} for any $0 \leq \beta \leq 1$ with corresponding stream function $-\cos(\varphi)$. Note that this is in fact an eigenfunction of the Laplacian with eigenvalue $2$. Letting $w=\symp \psi$, $v_L = \symp \sigma$ and noting that $\gamma(t,\vartheta,\varphi)=(\vartheta+t,\varphi)$, \eqref{left jacobi field} now becomes
\begin{equation}\label{example 1}
        \partial_t \sigma = \psi, \qquad \partial_t (-\Delta)^{1-\frac{\beta}{2}} \psi + 2^{1-\frac{\beta}{2}} \partial_\vartheta \psi = 0
\end{equation}
with initial condition $\sigma(0)=0$.

Taking $\psi(t,\vartheta,\varphi) = \xi(t)\phi_n(\vartheta,\varphi)$ where $\phi_n$ is a spherical harmonic such that $(-\Delta)\phi_n = n(n+1)\phi_n$ and $\partial_\vartheta \phi_n = in \phi_n$ then gives:
\begin{equation}\label{example 2}
    \partial_t \sigma = \psi, \qquad (n(n+1))^{1-\frac{\beta}{2}}\partial_t \xi + i n 2^{1-\frac{\beta}{2}} \xi = 0
\end{equation}
whose solutions, with $\xi(0)=1$, are given by
\begin{equation}
    \xi(t) = e^{-int\left(\frac{2}{n(n+1)}\right)^{1-\frac{\beta}{2}}}
\end{equation}
and
\begin{equation}
    \sigma(t) = \left(\frac{i}{n}\left(\frac{n(n+1)}{2}\right)^{1-\frac{\beta}{2}}\right)\left(e^{-int\left(\frac{2}{n(n+1)}\right)^{1-\frac{\beta}{2}}} - 1\right)\phi_n.
\end{equation}
Notice now that $\sigma$ vanishes at $T_n(\beta) = \frac{2 \pi}{n}\left(\frac{n(n+1)}{2}\right)^{1-\frac{\beta}{2}}$ and hence $\gamma(T_n(\beta),\vartheta,\varphi)$ is conjugate to the identity. In particular if $0\leq \beta <1$ we see these points are spread out, whilst if $\beta=1$, we have $T_n(1) = \pi \sqrt{\frac{2(n+1)}{n}}$; forming a sequence of times, and indeed conjugate points, which cluster at $T=\pi\sqrt{2}$.
\end{example}

There are several natural continuations of this work. One is the study of degeneracy phenomena of the induced geodesic distance. For the $L^2$-metric ($\beta=0$), it has been shown that the geodesic distance is a true distance function~\cite{arnold1966sur,michor2005vanishing}. On the other hand,
for the metric corresponding to the SQG equation ($\beta=1$), it vanishes identically on the whole group~\cite{bauer2020vanishing}. This raises the question: for which $\beta$ does this change in behavior occur? Closely related to this is the notion of the corresponding diameter of the diffeomorphism group. This is well understood for the geodesic distance of the $L^2$-metric~\cite{arnold2021topological, shnirelman1985the, brandenbursky2017the}, but the situation for general $\beta$ has not yet been studied.

\bibliographystyle{amsplain}
\bibliography{bibliography}

\vfill

\end{document}